\newtheorem{thm}{Theorem}
\begin{document}

\newcommand{\Astrut}{\rule[-.5ex]{0pt}{2ex}}%
\newcommand{\struta}{\rule[-1.25ex]{0pt}{2ex}}%
\newcommand{\strutb}{\rule[-.9ex]{0pt}{2ex}}%
\newcommand{\mystrut}{\vrule height9.5pt depth1.5pt width0pt}

\newcommand{\agap}{\hspace{1.5ex}}
\newcommand{\AlgSec}[1]{{\bf[#1.]}\vrule height10pt depth1.5pt width0pt}

\newcommand{\tab}{\par\noindent\mystrut}
\newcommand{\tabb}{\tab\hskip1.5em}
\newcommand{\tabbb}{\tab\hskip3.5em}
\newcommand{\tabbbb}{\tab\hskip5.5em}
\newcommand{\tabbbbb}{\tab\hskip7.5em}
\newcommand{\tabbbbbb}{\tab\hskip9.5em}

\newcommand{\alg}[1]{\par\noindent\mystrut\ignorespaces\hbox to\textwidth{#1\hfill}}
\newcommand{\algt}[1]{\par\noindent\mystrut\hbox to\textwidth{\ignorespaces\hskip1.5em#1\hfill}}
\newcommand{\algtt}[1]{\par\noindent\mystrut\hbox to\textwidth{\ignorespaces\hskip3.5em#1\hfill}}
\newcommand{\algttt}[1]{\par\noindent\mystrut\hbox to\textwidth{\ignorespaces\hskip5.5em#1\hfill}}
\newcommand{\algtttt}[1]{\par\noindent\mystrut\hbox to\textwidth{\ignorespaces\hskip7.5em#1\hfill}}
\newcommand{\algttttt}[1]{\par\noindent\mystrut\hbox to\textwidth{\ignorespaces\hskip9.5em#1\hfill}}
\newcommand{\clg}[2]{\par\noindent\mystrut\hbox to\textwidth{\ignorespaces#1\hfill[#2]}}
\newcommand{\clgt}[2]{\par\noindent\mystrut\hbox to\textwidth{\ignorespaces\hskip1.5em#1\hfill[#2]}}
\newcommand{\clgtt}[2]{\par\noindent\mystrut\hbox to\textwidth{\ignorespaces\hskip3.5em#1\hfill[#2]}}
\newcommand{\clgttt}[2]{\par\noindent\mystrut\hbox to\textwidth{\ignorespaces\hskip5.5em#1\hfill[#2]}}
\newcommand{\clgtttt}[2]{\par\noindent\mystrut\hbox to\textwidth{\ignorespaces\hskip7.5em#1\hfill[#2]}}
\newcommand{\clgttttt}[2]{\par\noindent\mystrut\hbox to\textwidth{\ignorespaces\hskip9.5em#1\hfill[#2]}}

\newcommand{\tac}{\par\noindent\mystrut\hskip1em}
\newcommand{\tabc}{\tab\hskip2.5em}
\newcommand{\tabcc}{\tab\hskip4.5em}
\newcommand{\tabccc}{\tab\hskip6.5em}
\newcommand{\tabcccc}{\tab\hskip8.5em}
\newcommand{\tabccccc}{\tab\hskip10.5em}

\newcommand{\REPEAT}{\textbf{repeat}\Astrut}
\newcommand{\WHILE}{\textbf{while}\hskip2pt}
\newcommand{\FOR}{\textbf{for}\hskip2pt}
\newcommand{\IF}{\textbf{if}\hskip2pt}

\newcommand{\logical}[1]{\{\mbox{#1}\}}
\newcommand{\proc}[1]{\setbox4=\hbox{\noindent\strut#1}\boxit{\box4}}
\newcommand{\procc}[1]{\setbox4=\vbox{\hsize 20pc \noindent\strut#1}\boxit{\box4}}
\newcommand{\DO}{\hskip2pt\textbf{do}\hskip2pt\Astrut}
\newcommand{\UNTIL}{\textbf{until}\hskip2pt}
\newcommand{\BEGIN}{\textbf{begin}}
\newcommand{\END}{\textbf{end}}
\newcommand{\ENDIF}{\textbf{end if}}
\newcommand{\ENDWHILE}{\textbf{end while}}
\newcommand{\ENDREPEAT}{\textbf{end repeat}}
\newcommand{\ENDDO}{\textbf{end do}}
\newcommand{\ENDFOR}{\textbf{end (for)}}
\newcommand{\THEN}{\hskip2pt\textbf{then}\hskip2pt}
\newcommand{\ELSE}{\hskip2pt\textbf{else}\hskip2pt}
\newcommand{\FIRSTELSE}{\textbf{else}\hskip2pt}
\newcommand{\ELSEIF}{\textbf{else if}}
\newcommand{\RETURN}{\textbf{return}}
\newcommand{\FIRST}{\hskip-2pt}  

\newcommand{\STOP}{\mathbf{stop}}
\newcommand{\BREAK}{\mathbf{break}}
\newcommand{\TRUE}{\mathbf{true}}
\newcommand{\FALSE}{\mathbf{false}}
\newcommand{\true}{\mathbf{true}}
\newcommand{\false}{\mathbf{false}}
\newcommand{\NOT}{\mathop{\mathbf{not}\,}}
\newcommand{\AND}{\mathop{\;\mathbf{and}\;}}
\newcommand{\OR}{\mathop{\;\mathbf{or}\;}}


\newcommand{\badSteps}{\mathit{badSteps}}
\newcommand{\converged}{\mathit{converged}}
\newcommand{\complete}{\hbox{\it complete}}
\newcommand{\CSpoint}{\hbox{\it subspace\_stationary\_pt}}
\newcommand{\exchange}{\mbox{\it exchange}}
\newcommand{\exit}{\hbox{\it exit}}
\newcommand{\found}{\hbox{\it found}}
\newcommand{\feasible}{\mathit{feasible}}
\newcommand{\hitcon}{\mbox{\it hit\_constraint}}
\newcommand{\addcon}{\mbox{\it add\_constraint}}
\newcommand{\Hsingular}{\mbox{\it singular\_H}}
\newcommand{\imprvd}{\hbox{\it improved}}
\newcommand{\indef}{\hbox{\it indefinite}}
\newcommand{\minimizer}{\mbox{\it subspace_minimizer}}
\newcommand{\optimal}{\mathit{optimal}}
\newcommand{\posdef}{\hbox{\it positive\_definite}}
\newcommand{\unbndd}{\hbox{\it unbounded}}
\newcommand{\semidef}{\hbox{\it positive\_semidefinite}}
\newcommand{\stationary}{\hbox{\it stationary\_point}}
\newcommand{\subspace}{\hbox{\it subspace\_convergence}}
\newcommand{\singular}{\mathit{singular}}
\newcommand{\tol}{\mathit{tol}}
\newcommand{\uncon}{\mbox{\it unconstrained\_step}}
\newcommand{\vertex}{\mbox{\it vertex}}
\newcommand{\wschosen}{\hbox{\it working\_set\_chosen}}
\newcommand{\Wsingular}{\mbox{\it singular\_W}}
\newcommand{\Asingular}{\mbox{\it singular\_A}}


\begin{frontmatter}



\title{Limited-memory BFGS Systems with Diagonal Updates}

\author[jbe]{Jennifer B. Erway\fnref{label1}\corref{cor1}}
\ead{erwayjb@wfu.edu}
\address[jbe]{Department of Mathematics\\
         Wake Forest University \\
         Winston-Salem, NC 27109}

\author[rfm]{Roummel F. Marcia\fnref{label2}}
\ead{rmarcia@ucmerced.edu}
\address[rfm]{5200 N. Lake Road,\\
         University of California, Merced\\ 
         Merced, CA 95343}

\fntext[label1]{Supported in part by NSF grant DMS-08-11106.}
\fntext[label2]{Supported in part by NSF grant DMS-0965711.}

\cortext[cor1]{Corresponding author, phone number: (336) 758-5356}

\begin{abstract}
  In this paper, we investigate a formula to solve systems of the form
  $(B+\sigma I)x=y$, where $B$ is a limited-memory BFGS quasi-Newton matrix
  and $\sigma$ is a positive constant. These types of systems arise
  naturally in large-scale optimization such as trust-region methods as well as
  doubly-augmented Lagrangian methods.  We show that provided a simple
  condition holds on $B_0$ and $\sigma$, the system $(B+\sigma I)x=y$ can
  be solved via a recursion formula that requies only vector inner
  products.  This formula has complexity $M^2n$, where $M$ is the number of
  L-BFGS updates and $n \gg M$ is the dimension of $x$.
\end{abstract}

\begin{keyword}
L-BFGS, quasi-Newton methods, limited-memory methods, inverses, Sherman-Morrison-Woodbury,
diagonal updates


\end{keyword}

\end{frontmatter}

\newcommand{\trace}{\mathop{\mathrm{trace}}}

\newcommand{\tWhite}[1]{{\color{white}#1}}
\newcommand{\defined}{\mathop{\,{\scriptstyle\stackrel{\triangle}{=}}}\,}
\newcommand{\words}[1]{\mgap\textrm{#1}\mgap}
\newcommand{\mgap}{\;\;}
\newcommand{\subject}{\hbox{\rm subject to}}
\newcommand{\minimize}[1]{{\displaystyle\minim_{#1}}}
\newcommand{\minim}{\mathop{\mathrm{minimize}}}
\newcommand{\diag}{\mathop{\mathrm{diag}}}
\newcommand{\strict}{\mathop{\mathrm{strict}}}
\newcommand{\interior}{\mathop{\mathrm{int}}}
\newcommand{\st}{\mathop{\mathrm{ : }}}
\newcommand{\BigO}[1]{O\big(#1\big)}
\newcommand{\strt}{\rule[-.5ex]{0pt}{3ex}}
\newcommand{\hstrt}{\rule[-1ex]{0pt}{3.5ex}}
\newcommand{\bgap}{\;\;\;}


\section{Introduction}
\label{sec:intro}

In this paper we develop a recursion formula for solving systems of the
form $(B_k+\sigma I)x=y$, where $B_k$ is the $k$-th step $n\times n$
limited-memory (L-BFGS) quasi-Newton Hessian
(see e.g., \cite{ByrNS94,LiuN89,Morales02,Noc80}), $\sigma$ is a positive
constant and $x,y \in \Re^n$.  Linear systems of this form appear in both
large-scale unconstrained and constrained optimization.  For example, these
equations arise in the optimality conditions for the two-norm trust-region
subproblem and the so-called \emph{doubly-augmented} system and their
applications~\cite{ErwG09,ErwGG09,ForGG07}.  Additionally, matrices of the
form $B_k+\sigma I$ can be used as preconditioners for $H+\sigma I$, where
$H$ is often the Hessian of a twice-continuously differentiable function
$f$.

The algorithm proposed in this paper is an extension of the
Sherman-Morrison-Woodbury (SMW) formula to compute the inverse of
$B_k+\sigma I$.  The algorithm begins by pairing the initial L-BFGS matrix
$B_0$ with the initial diagonal update $\sigma I$; then, the algorithm uses
the SMW formula to compute the inverse of $B_k+\sigma I$ after updating
with a sequence of the L-BFGS updates.  In this paper, we derive a
recursion formula for efficiently computing matrix-vector products with
this inverse.  The proposed algorithm requires $\mathcal{O}(M^2n)$
multiplications, where $M$ is the maximum number of L-BFGS updates.

The structure of the remainder of the paper is as follows.  In
\S\ref{sec:applications}, we motivate in detail why solving systems of the
form $(B_k+\sigma I)x=y$ is crucial in several optimization settings.
Specifically, we consider its use in solving the trust-region subproblem
and in the preconditioning of doubly-augmented system in barrier methods.
In \S\ref{sec:lbfgs}, we provide an overview of the L-BFGS quasi-Newton
matrix, including operation counts of the well-known recursive formulas
for operations with the quasi-Newton matrix.  In \S\ref{sec:formula}, we
consider the formula for solving systems of the form $(B_k+\sigma I)x=y$
and show how it compares computationally with direct and indirect methods
in \S\ref{sec-experiments}.   We offer potential extensions of this formula
in \S\ref{sec-extensions} and draw some conclusions in \S\ref{sec:conc}.

In this paper, we assume that updates are chosen that ensure all L-BFGS
quasi-Newton matrices are positive definite.

\section{Motivations from large-scale optimization}
\label{sec:applications}
Systems of the form $(B+\sigma I)x=y$, where $B$ is a quasi-Newton Hessian
appear throughout large-scale, nonlinear optimization.  In this section, we
motivate our research by presenting two instances in optimization that
would benefit from a recursion formula to directly solve systems with a
system matrix of the form $B+\sigma I$.  The first motivation is in
trust-region methods for unconstrained optimization; the second motivation
comes from barrier methods for constrained optimization.

\bigskip

\noindent \textbf{Motivation 1: Trust-region methods.}  Trust region
methods are one of the most popular types of methods for unconstrained
optimization.  Trust region methods have been extended into the
quasi-Newton setting by using BFGS or L-BFGS updates (see, for
example,~\cite{BurWX96, David76, DenM79, Erwa11, Ger04, Kauf99, NiY97, 
  Pow70d, Pow70c, YuaW06}).  The bulk of the work for a trust-region method
occurs when solving the trust-region subproblem.  Given the current
trust-region iterate $\bar{x}$, the two-norm trust-region subproblem for
minimizing a function $f$ is given by
\begin{equation} \label{eqn-trustProblem}
     \minimize{s\in\Re^n}\mgap q(s)\defined g^T s + \frac{1}{2} s^T B  s
     \bgap\subject            \mgap  \|s\|_2 \le \delta,
\end{equation}
where $g\defined \nabla f(\bar{x})$, $B$ is an L-BFGS quasi-Newton
approximation to the Hessian of $f$ at $\bar{x}$, and $\delta$ is a
given positive trust-region radius.

Optimality conditions for the L-BFGS quasi-Newton trust-region subproblem are
summarized in the following result 
(adapted from \cite{ConGT00a,Gay81,MorS84,Sor82}).
\begin{thm}\label{thmTrustProblem}
  Let $\delta$ be a given positive constant. A vector $s^*$ is a
  global solution of the trust-region problem
  {\rm(\ref{eqn-trustProblem})} if and only if $\|s^*\|_2 \le
  \delta$ and there exists a unique $\sigma^*\ge0$ such that
\begin{equation}                              \label{eqnUC-TR-optimality}
  (B+\sigma^* I)s^* = - g  \words{and}
   \sigma^*(\delta - \|s^*\|_2)=0.
\end{equation}
Moreover, the global minimizer is unique.
\end{thm}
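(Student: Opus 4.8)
The plan is to exploit the standing assumption that the L-BFGS matrix $B$ is positive definite. This makes $B+\sigma I$ positive definite for every $\sigma\ge 0$, which eliminates the so-called hard case that complicates the general, possibly indefinite, version of this characterization, and it lets me organize the entire argument around the shifted quadratic
\begin{equation*}
  \phi_\sigma(s)\defined q(s)+\tfrac12\sigma\big(\|s\|_2^2-\delta^2\big)
    = g^T s+\tfrac12 s^T(B+\sigma I)s-\tfrac12\sigma\delta^2 .
\end{equation*}
Because $B+\sigma I\succ 0$, this $\phi_\sigma$ is strictly convex in $s$, with a unique unconstrained minimizer characterized by $\nabla\phi_\sigma(s)=g+(B+\sigma I)s=0$.

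For the \emph{sufficiency} direction I would complete the square. Assuming the stated conditions hold, the equation $(B+\sigma^*I)s^*=-g$ says exactly that $s^*$ is the unconstrained minimizer of $\phi_{\sigma^*}$, so $\phi_{\sigma^*}(s)\ge\phi_{\sigma^*}(s^*)$ for all $s$, with strict inequality for $s\ne s^*$ by strict convexity. Rearranging gives
\begin{equation*}
  q(s)\ge q(s^*)+\tfrac12\sigma^*\big(\delta^2-\|s\|_2^2\big)
       +\tfrac12\sigma^*\big(\|s^*\|_2^2-\delta^2\big).
\end{equation*}
The complementarity condition $\sigma^*(\delta-\|s^*\|_2)=0$ annihilates the last term, and for any feasible $s$ (so $\|s\|_2\le\delta$) the middle term is nonnegative since $\sigma^*\ge 0$; hence $q(s)\ge q(s^*)$ for every feasible $s$, strictly when $s\ne s^*$. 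This establishes both global optimality of $s^*$ and uniqueness of the minimizer in one stroke.

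For the \emph{necessity} direction, suppose $s^*$ is a global solution and split on the location of the unconstrained minimizer $s_N\defined -B^{-1}g$, which is well defined since $B\succ0$. If $\|s_N\|_2\le\delta$, then $s_N$ minimizes $q$ over all of $\Re^n$ and is feasible, so $s^*=s_N$ and $\sigma^*=0$ meets every condition. Otherwise $\|s_N\|_2>\delta$, the constraint must be active, and I would introduce the secular function
\begin{equation*}
  \psi(\sigma)\defined \big\|(B+\sigma I)^{-1}g\big\|_2^2
     = \sum_{i=1}^n \frac{\hat g_i^2}{(\lambda_i+\sigma)^2},
\end{equation*}
where the $\lambda_i>0$ are the eigenvalues of $B$ and the $\hat g_i$ are the components of $g$ in the corresponding eigenbasis. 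On $[0,\infty)$ this $\psi$ is continuous, strictly decreasing (provided $g\ne0$), and tends to $0$; since $\psi(0)=\|s_N\|_2^2>\delta^2$, there is a unique $\sigma^*>0$ with $\psi(\sigma^*)=\delta^2$. Setting $s^*\defined-(B+\sigma^*I)^{-1}g$ then yields $(B+\sigma^*I)s^*=-g$, $\|s^*\|_2=\delta$, and $\sigma^*>0$, and the sufficiency argument identifies this $s^*$ as the unique global minimizer.

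The main obstacle — and the point I would treat most carefully — is the dual-feasibility claim $\sigma^*\ge0$ together with the uniqueness of $\sigma^*$; in the classical indefinite setting these are precisely where the semidefiniteness requirement and the hard case intrude. Here the positive definiteness of $B$ lets the monotone secular function $\psi$ carry all of this load at once: strict monotonicity on $[0,\infty)$ forces $\sigma^*$ to be simultaneously nonnegative and unique and rules out the degenerate multiplicity that the general theorem must accommodate. The remaining bookkeeping — the edge cases $g=0$ and $\|s_N\|_2=\delta$, and checking that an active constraint forces $\sigma^*>0$ rather than merely $\sigma^*\ge0$ — is routine once the secular-function framework is in place.
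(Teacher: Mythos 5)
Your proof is correct, but note that the paper itself offers no proof at all: Theorem~\ref{thmTrustProblem} is stated as ``adapted from'' Conn--Gould--Toint, Gay, Mor\'e--Sorensen, and Sorensen, and the proof environment is closed immediately with a citation to those sources. Those references establish the harder, general version of the characterization, in which $B$ may be indefinite and the conditions must include positive semidefiniteness of $B+\sigma^* I$ together with a careful treatment of the hard case. What you have done instead is give a self-contained argument that is valid precisely because of the paper's standing assumption that the L-BFGS matrices are positive definite: the shifted quadratic $\phi_{\sigma}(s)=q(s)+\tfrac12\sigma(\|s\|_2^2-\delta^2)$ is strictly convex for every $\sigma\ge 0$, so sufficiency (and uniqueness of the minimizer) follows from completing the square plus complementarity, and necessity reduces to the monotone secular equation $\|(B+\sigma I)^{-1}g\|_2^2=\delta^2$, which has a unique nonnegative root whenever the Newton step is infeasible. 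This buys a short elementary proof and automatic exclusion of the hard case; what it gives up is generality --- your argument would not survive if $B$ were merely symmetric, which is exactly the setting the cited works address. Two small points worth tightening if you write this up: (i) in the ``necessity'' case $\|s_N\|_2>\delta$ you should justify explicitly that the constraint is active (if $\|s^*\|_2<\delta$ then $s^*$ is an unconstrained local, hence global, minimizer of the strictly convex $q$, forcing $s^*=s_N$, a contradiction); and (ii) uniqueness of $\sigma^*$ for a \emph{given} $s^*$ follows most cleanly from subtracting two instances of $(B+\sigma I)s^*=-g$ to get $(\sigma_1-\sigma_2)s^*=0$ and disposing of the case $s^*=0$ via $g=0$ and $\delta>0$, rather than from the secular function alone.
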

\quad\endproof

The Mor\'e-Sorensen method~\cite{MorS83} is the preferred \emph{direct}
solver for the general trust-region subproblem.  The method seeks 
a solution $(x^*,\sigma^*)$ that satisfies the optimality conditions for
the trust-region subproblem (in this case (\ref{eqnUC-TR-optimality})) to
any prescribed accuracy.  The algorithm below summarizes the
Mor\'e-Sorensen method~\cite{MorS83} for the \emph{general} trust-region
subproblem:

\begin{Pseudocode}{Algorithm 2.1: Mor\'e-Sorensen method}\label{alg-ms}
\tab Let $\sigma\ge 0$  with $B+\sigma I$ positive definite and $\delta>0$
\tab \WHILE{} $\NOT \converged$ \DO\
\tabb Factor $B+\sigma I=R^TR$; 
\tabb Solve $R^TRp=-g$;
\tabb Solve $R^Tq=p$;
\tabb $\sigma\leftarrow \sigma+\left(\frac{\|p\|_2}{\|q\|_2}\right)^2
\left(\frac{\|p\|_2-\delta}{\delta}\right);$
\tab \END\
\end{Pseudocode}

Notice that the Mor\'e-Sorensen method solves systems of the form
$(B+\sigma I)s=-g$ at each iteration by computing the Cholesky
factorization of $B+\sigma I$.  For general large-scale optimization, where
$B$ is not a quasi-Newton Hessian, this method is often prohibitively
expensive if one cannot exploit structure in the system matrix $B$.
However, in the quasi-Newton setting, it is possible to compute the
Choleksy factorization of a BFGS matrix of $B_{k+1}$ by updating the
factorization for $B_k$ (see, for example,~\cite{GilGMS07}).

In this paper, we develop a method for solving the system $(B+\sigma
I)s=-g$ using a recursion relation in place of using Cholesky
factorizations. It is then possible to continue on with the Mor\'e-Sorensen
method by updating $\sigma$ in accordance with the ideas proposed
in~\cite{MorS83}.  (Note: The source of the formula for updating $\sigma$
in Algorithm~\ref{alg-ms} is based on applying Newtons method to the second
optimality condition in (\ref{eqnUC-TR-optimality}).)  The recursion
formula proposed in this paper extends the Mor\'e-Sorensen method into the
quasi-Newton setting without having to update Cholesky factorizations.

\bigskip

\noindent \textbf{Motivation 2: Barrier methods.}  The second motivating
example comes from barrior methods for constrained optimization 
(see, e.g., \cite{Fiacco1990,Wright05}).
Consider the following problem:
\begin{equation} \label{eqn-2nd-application}
     \minimize{x\in\Re^n}\mgap f(x)
\bgap\subject            \mgap  c(x) \ge 0,
\end{equation}
where $f(x):\Re^{n}\rightarrow\Re$ is a real-valued function and
$c(x):\Re^{n}\rightarrow\Re$ is a quadratic constraint of the form
$c(x)=\frac{1}{2}\delta^2-\frac{1}{2}x^Tx$, i.e., a two-norm
constraint on the size of $x$ where $\delta$ is a positive constant.
(Note this can be considered a generalization of the 
trust-region subproblem.)  A sufficient condition for a point $x^*$ to
be the minimizer of (\ref{eqn-2nd-application}) is the existence of
the Lagrange multiplier $\lambda^*$ satisfying the following:
\begin{equation}                      \label{eqn-optimality-conditions}
\begin{array}{r@{\hspace{4pt}}c@{\hspace{4pt}}l%
                 @{\hspace{10pt}}l}
\nabla c(x^*)\lambda^*&=&g(x^*) ,  &\words{with} H(x^*)+ \lambda^*I \words{positive definite,} \\
      c(x^*)\lambda^* &=&\, 0,  &\words{with} \lambda^*>0 \words{if} c(x^*)=0 \words{and} \lambda^*, c(x^*) \ge 0,
\end{array}
\end{equation}
where $g(x) = \nabla f(x)$ is the gradient and $H(x) = \nabla^2 f(x)$ is the Hessian of $f(x)$.  
Primal-dual methods  \cite{DenS96, ForGW02, NocW06} solve this type of problem
by solving a sequence of \emph{perturbed} problems.  Specifically, we define
the function $F_{\mu} \colon \Re^{n+1} \rightarrow \Re^{n+1}$ with
$$
	 F_{\mu}(x,\lambda)=
	\begin{pmatrix}
		 g(x)+\lambda x \\
		 c(x)\lambda -\mu
	\end{pmatrix},
$$
for some fixed perturbation parameter $\mu > 0$.
Note that $\nabla c(x) = -x$ and as $\mu \rightarrow 0$, 
the root of $F_\mu(x,\lambda)$ converges to a point that satisfies 
the equations in (\ref{eqn-optimality-conditions}).

The Newton equations associated with finding a root of $F_\mu(x,\lambda)$ are
given by
\begin{equation}\label{eqn-newtoneqns-app2}
\begin{pmatrix}H(x)+\lambda I & x \\ -\lambda x^T  & c(x) \end{pmatrix}
\begin{pmatrix}\varDelta x \\ \varDelta \lambda \end{pmatrix}
= -
\begin{pmatrix}g(x)+ \lambda x \\ c(x)\lambda-\mu\end{pmatrix}.
\end{equation}
By dividing the second row by $-\lambda$ and letting $d \defined c(x)/\lambda$, 
we get the symmetric system
\begin{equation}\label{eqn-newNewton}
 \begin{pmatrix}H(x)+\lambda I &  x \\ x^T & -d
 \end{pmatrix}
\begin{pmatrix}\varDelta x \\ \varDelta\lambda \end{pmatrix}
= -
\begin{pmatrix}g(x)+\lambda x \\ (\mu-d)/\lambda \end{pmatrix}.
\end{equation}
Unfortunately, the system matrix in (\ref{eqn-newNewton}) is indefinite
even when $H+\lambda I$ is positive definite; however, after rearranging terms, it can be shown that this system is equivalent to the
\emph{doubly-augmented} system~\cite{ForGG07}:
\begin{equation}\label{eqn-doubly-aug2}
 \begin{pmatrix}H(x)+\lambda I+2xx^T &  -x \\ -x^T & d
 \end{pmatrix}
\begin{pmatrix}\varDelta x \\ \varDelta\lambda \end{pmatrix}
= -
\begin{pmatrix}g(x)+ \lambda x+(2/d)(\mu-d)x \\ (d-\mu)/\lambda\end{pmatrix},
\end{equation}
which is a positive-definite system when $H+\lambda I$
is positive definite~\cite{ForGG07}.

The linear system (\ref{eqn-doubly-aug2}) can be preconditioned by the matrix
\begin{equation}\label{eqn-matP}
P=\begin{pmatrix}
B+\lambda I + 2xx^T &   -x \\
-x^T & d
\end{pmatrix},
\end{equation}
where $B$ is an L-BFGS quasi-Newton approximation to $\nabla^2 f(x)$.
Solves with the preconditioner $P$ for a general system of the form
$$
     \begin{pmatrix}  B+\lambda I + 2xx^T & - x\\
                      -x^T         &\, d      \end{pmatrix}
     \begin{pmatrix} x_1 \\
            x_2   \end{pmatrix}
 \ = \ \begin{pmatrix} y_1 \\
            y_2   \end{pmatrix}    
$$
can be performed by solving the following equivalent system:
\begin{equation}\label{eqn-equiv}
 \begin{pmatrix}  B+\lambda I & \,x \\
             x^T &  - d   \end{pmatrix}
 \begin{pmatrix} x_1 \\
        x_2   \end{pmatrix}
 \ = \ \begin{pmatrix}  y_1 + 2y_2x\\
            -y_2              \end{pmatrix} 
\end{equation}
(see ~\cite{ErwG09,ForGG07}).
Note that the inverse of the system matrix in (\ref{eqn-equiv}) is given by
$$
 \begin{pmatrix}\,  B+\lambda I & \,\bar{x}\\
          \bar{x}^T    &  - 1       \end{pmatrix}^{-1}
 = 
   \begin{pmatrix} I &   -w  \\
          0 &   1  \end{pmatrix}
   \begin{pmatrix}  (B+\lambda I)^{-1} &         0             \\
                0     & -\big(1+w^T \bar{x}\big)^{-1}  \end{pmatrix}
\begin{pmatrix}  I   &  0  \\
           -w^T &  1    \end{pmatrix},
$$
where $w=(B+\lambda I)^{-1}x$ (for details, see ~\cite{ErwG09,ForGG07}). 
Thus, provided that solves with $B+\lambda I$ are efficient, solves with $P$ 
are efficient.  

In this paper, we develop a recursion formula that makes solves with
$(B+\lambda I)x=y$.  Thus, this recursion formula allows the use of
preconditioners the form (\ref{eqn-matP}) where $B$ is a L-BFGS
quasi-Newton approximation of $\nabla^2 f$.

\section{The Limited-memory BFGS method}
\label{sec:lbfgs}

In this section, we review the limited-memory BFGS (L-BFGS) method and state
important and well-known recursion formulas for computing products with an
L-BFGS quasi-Newton Hessian and its inverse.

The BFGS quasi-Newton method generates a sequence of positive-definite
matrices $\{B_k\}$ from a sequence of vectors
$\{y_k\}$ and $\{s_k\}$ defined as
$$y_k=\nabla f(x_{k+1})-f(x_k) \quad  \textrm{and} \quad s_k=x_{x+1}-x_k,$$
respectively.  The L-BFGS quasi-Newton method can be
viewed as the BFGS quasi-Newton method where only at most $M$ recently
computed updates are stored and used to update the initial matrix $B_0$.
Here $M$ is a positive constant with $M\ll n$.
The L-BFGS quasi-Newton approximation to the Hessian of $f$ is implicitly
updated as follows:
\begin{equation}
B_{k} = B_0 - \sum_{i=0}^{k-1} a_i {a_i}^T + \sum_{i=0}^{k-1} b_ib_i^T,
\label{eqn-bfgs}
\end{equation}
where 
\begin{equation}
a_i=\frac{B_is_i}{\sqrt{s_i^TB_is_i}}, 
\quad b_i = \frac{y_i}{\sqrt{y_i^T s_i}}, \quad 
B_0=\gamma_k^{-1} I, \label{eqn-bfgs-extra}
\end{equation}
and $\gamma_k>0$ is a constant.  In practice, $\gamma_k$ is often taken to
be $\gamma_k\defined s_{k-1}^Ty_{k-1}/\|y_{k-1}\|_2^2$ (see,
e.g.,~\cite{LiuN89} or~\cite{Noc80}).  

Suppose that we have computed $k$ updates ($k\leq M-1$) and have the
following updates stored in $S$ and $Y$:
$$ S= [s_0  \ldots s_{k-1}] \quad \textrm{and}
\quad Y=[y_0 \ldots y_{k-1}].$$ We update $S$ and $Y$ with
the most recently computed vector pair $(s_{k}, y_{k})$
as follows: 
\newpage
\begin{Pseudocode}{Algorithm 3.1: Update $S$ and $Y$}\label{alg-update}
\tab \IF\ $k<M-1$,
\tabb $S\leftarrow [S \,\,\, s_k]; \mgap Y\leftarrow [Y \,\,\, y_k]; \mgap
k\leftarrow k+1$;
\tab \ELSE\ 
\tabb \FOR\ $i=0,\ldots k-1$
\tabbb  $s_i\leftarrow s_{i+1}; \mgap y_i\leftarrow y_{i+1}$; 
\tabb \END\
\tabb $S\leftarrow [s_0,\dots s_{k-1}]; \mgap Y\leftarrow [y_0,\dots y_{k-1}]$; 
\tab \END\
\end{Pseudocode}

\noindent Thus, at all times we have exactly
$k$ stored vectors with $k\leq M-1$.

\medskip


For the L-BFGS method, there is an efficient recursion relation to compute
products with $B_k^{-1}$.  Given a vector $z$, 
the following algorithm~\cite{Noc80,NocW06} terminates with 
$r\defined B_k^{-1}z$:
\newline
\newline
\begin{Pseudocode}{Algorithm 3.2: Two-loop recursion to compute $r=B_k^{-1}z$}\label{alg-recursion}
  \tab $q\leftarrow z$;
  \tab \FOR\ $i=k-1,\dots,0$
  \tabb $\alpha_i\leftarrow(s_i^Tq)/(y_i^Ts_i)$;
  \tabb $q \leftarrow q-\alpha_iy_i$;
  \tab \END\ 
  \tab $r\leftarrow B_0^{-1} q$;
  \tab \FOR\ $i=0,\ldots, k-1$
  \tabb  $\beta \leftarrow (y_i^Tr)/(y_i^Ts_i)$;
  \tabb  $r \leftarrow r+(\alpha_i-\beta)s_i$:
  \tab \END\
\end{Pseudocode}

Pre-computing and storing $1/y_i^Ts_i$ for $0\leq i\leq k-1$, makes
Algorithm~\ref{alg-recursion} even more efficient.  Further details on the
L-BFGS method can found in~\cite{NocW06}; further background on the BFGS
can be found in~\cite{DenS96}.  The two-term recursion formula requires at
most $\mathcal{O}(Mn)$ multiplications and additions.  There is compact
matrix representation of the L-BFGS that can be used to compute products
with the L-BFGS quasi-Newton matrix (see, e.g., ~\cite{NocW06}). The
computational complexity at most $\mathcal{O}(Mn)$ multiplications.

\medskip 

There is an alternative representation of $B_k^{-1}$ from which the two-term
recursion can be more easily understood:
\begin{eqnarray}
  B^{-1}_{k} =  (V_{k-1}^T\cdots V_{0}^T ) B_0^{-1} (V_{0}\cdots V_{k-1}) &+&
  \frac{1}{y_{0}^Ts_{0}} (V_{k-1}^T\cdots V_{1}^T)s_0s_0^T(V_1\cdots V_{k-1}) \nonumber\\
  & + &
  \frac{1}{y_{1}^Ts_{1}} (V_{k-1}^T\cdots V_{2}^T)s_1s_1^T(V_2\cdots V_{k-1})\nonumber \\
  & + & \cdots \nonumber \\ 
  & + & \frac{1}{y_{k-1}^Ts_{k-1}} s_{k-1}s_{k-1}^T, \label{eqn-lbfgs2}
\end{eqnarray}
where $V_i=I-\frac{1}{y_i^Ts_i}y_is_i^T$ (see, e.g.,~\cite{NocW06}).  The
first loop in the two-term recursion for $B_k^{-1}z$ computes and stores
the products $(V_j\cdots V_{k-1})z$ for $j=0,\ldots, k-1$; in between the
first and second loop, $B_0^{-1}$ is applied; and finally, the second loop
computes the remainder of (\ref{eqn-lbfgs2}).  Computing the inverse of
$B_k+\sigma I$ is not equivalent to simply replacing $B_0^{-1}$ in the
two-loop recursion with $(B_0+\sigma I)^{-1}$.  To see this, notice that
replacing $B_0^{-1}$ in (\ref{eqn-lbfgs2}) with $(B_0+\sigma I)^{-1}$ would
apply the updates $V_i$ to the full quantity $(B_0+\sigma I)^{-1}$ instead
of only $B_0^{-1}$.  The main contribution of this paper is a recursion
formula that computes $(B_k+\sigma I)^{-1}z$ in an efficient manner using only vector inner products.
\section{The recursion formula}
\label{sec:formula}
Consider the problem of finding the inverse of $B+\sigma I$, where $B$ is
an L-BFGS quasi-Newton matrix.  The Sherman-Morrison-Woodbury (SMW) formula
gives the following formula for computing the inverse of $A+UV^T$, assuming
$A$ is invertible (see~\cite{GolV96}):
$$
(A+UV^T)^{-1}=A^{-1}-A^{-1}U(I+V^TA^{-1}U)^{-1}V^TA^{-1}.
$$
In the special case when $UV^T$ is a rank-one update to A, this formula becomes
$$(A+uv^T)^{-1}=A^{-1}-A^{-1}u(I+v^TA^{-1}u)^{-1}v^TA^{-1},$$
where $u$ and $v$ are both $n$-vectors.  For simplicity, first consider
computing the inverse of an L-BFGS quasi-Newton matrix after only one
update, i.e., the inverse of $B_1+\sigma I$.  Recall that
$$
B_1+\sigma I = (\gamma_1^{-1}+\sigma) I -a_0a_0^T+b_0b_0^T.
$$
To compute the inverse of this, we apply SMW twice.  To see this clearly,
let $$C_0=(\gamma_1^{-1}+\sigma) I, \quad C_1= (\gamma_1^{-1}+\sigma) I -a_0a_0^T, 
\quad C_2= (\gamma_1^{-1}+\sigma) I -a_0a_0^T+b_0b_0^T.
$$
Applying SMW to $C_1$ yields
\begin{eqnarray}
C_1^{-1} & = & C_0^{-1}+C_0^{-1}a_0(1-a_0^TC_0^{-1}a_0)^{-1}a_0^TC_0^{-1}\\
         & = & C_0^{-1}+\frac{1}{(1-a_0^TC_0^{-1}a_0)}C_0^{-1}a_0a_0^TC_0^{-1}.
\end{eqnarray}
Applying SMW once more we obtain $C_2^{-1}$ from $C_1^{-1}$:
$$ C_2^{-1}= C_1^{-1}-\frac{1}{(1+b_0^TC_1^{-1}b_0)}C_1^{-1}b_0b_0^TC_1^{-1},$$
giving an expression for $(B_1+\sigma I)^{-1}$.  This is the basis for the
following recursion method that appears in~\cite{Mill81}:

\begin{thm}\label{thrm-maa}
  Let $G$ and $G+H$ be nonsingular matrices and let $H$ have positive rank
  $M$.  Let $H=E_0+E_1+\cdots + E_{M-1}$ where each $E_k$ has rank one and
  $C_{k+1}=G+E_0+\cdots +E_k$ is nonsingular for $i=0,\ldots M-1$.  Then if
  $C_0=G$, 
\begin{equation}\label{eqn-maa-recursion}  
C_{k+1}^{-1}=C_k^{-1}-v_kC_k^{-1}E_kC_{k}^{-1}, \mgap k=0,\ldots, M-1,
\end{equation}
  where 
\begin{equation}
	 v_k = \frac{1}{1+\trace\left(C_k^{-1}E_k\right)}.	\label{eqn-vk}
\end{equation}
    In particular,
\begin{equation}\label{eqn-g+h}
(G+H)^{-1}=C_{M-1}^{-1}-v_{M-1}C_{M-1}^{-1}E_{M-1}C_{M-1}^{-1}.
\end{equation}
\end{thm}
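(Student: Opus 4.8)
The plan is to establish the recursion (\ref{eqn-maa-recursion}) by applying the Sherman-Morrison formula once per rank-one term, and then to read off (\ref{eqn-g+h}) as the final step of that recursion. First I would fix $k \in \{0,\ldots,M-1\}$ and use the rank-one assumption on $E_k$ to write $E_k = p_k q_k^T$ for some vectors $p_k, q_k \in \Re^n$. Since $C_{k+1} = C_k + E_k = C_k + p_k q_k^T$ and both $C_k$ and $C_{k+1}$ are nonsingular by hypothesis (with $C_0 = G$ nonsingular to anchor the indexing), the matrix-determinant lemma
$$\det\bigl(C_k + p_k q_k^T\bigr) = \det(C_k)\bigl(1 + q_k^T C_k^{-1} p_k\bigr)$$
guarantees that the scalar $1 + q_k^T C_k^{-1} p_k$ is nonzero, so $v_k$ is well-defined.

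The key step is to recognize that the scalar in the Sherman-Morrison denominator coincides with the trace appearing in (\ref{eqn-vk}). By the cyclic property of the trace,
$$\trace\bigl(C_k^{-1} E_k\bigr) = \trace\bigl(C_k^{-1} p_k q_k^T\bigr) = \trace\bigl(q_k^T C_k^{-1} p_k\bigr) = q_k^T C_k^{-1} p_k,$$
the last equality holding because $q_k^T C_k^{-1} p_k$ is a scalar. Substituting into the rank-one Sherman-Morrison identity
$$\bigl(C_k + p_k q_k^T\bigr)^{-1} = C_k^{-1} - \frac{C_k^{-1} p_k q_k^T C_k^{-1}}{1 + q_k^T C_k^{-1} p_k},$$
and using $C_k^{-1} p_k q_k^T C_k^{-1} = C_k^{-1} E_k C_k^{-1}$ together with $v_k = \bigl(1 + \trace(C_k^{-1} E_k)\bigr)^{-1}$, I obtain exactly $C_{k+1}^{-1} = C_k^{-1} - v_k C_k^{-1} E_k C_k^{-1}$, which is (\ref{eqn-maa-recursion}).

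Finally, since $C_M = G + E_0 + \cdots + E_{M-1} = G + H$, specializing the recursion to $k = M-1$ yields $(G+H)^{-1} = C_{M-1}^{-1} - v_{M-1} C_{M-1}^{-1} E_{M-1} C_{M-1}^{-1}$, which is (\ref{eqn-g+h}). The argument is essentially bookkeeping once the rank-one factorization of each $E_k$ is in hand, and there is no genuine induction to unwind beyond the nonsingularity of $C_0 = G$, because every $C_k$ is assumed nonsingular outright. I expect the only point requiring care to be the well-definedness of each $v_k$; this is a modest obstacle that rests on the nonsingularity hypotheses (via the determinant identity above) rather than on the Sherman-Morrison formula itself.
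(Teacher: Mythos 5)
Your proof is correct. The paper offers no proof of its own for this theorem (it defers entirely to the citation of Miller), but your argument --- factoring each rank-one $E_k$ as $p_kq_k^T$, invoking the matrix-determinant lemma together with the assumed nonsingularity of $C_k$ and $C_{k+1}$ to justify that $1+q_k^TC_k^{-1}p_k\neq 0$, identifying this scalar with $1+\trace(C_k^{-1}E_k)$ via cyclicity of the trace, and applying Sherman-Morrison once per update --- is exactly the reasoning the paper itself sketches informally just before the theorem when it passes from $C_0^{-1}$ to $C_1^{-1}$ to $C_2^{-1}$. You correctly isolate the one point needing care, the well-definedness of each $v_k$, and close it from the hypotheses rather than assuming it, so nothing is missing.
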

\begin{proof}
See~\cite{Mill81}.  
\end{proof}

\medskip

We now show that applying the above recursion method to $B_k+\sigma I$, 
the product $(B_k+\sigma I)^{-1}z$ can be computed recursively, assuming
$\gamma_k\sigma$ is bounded away from zero.
\begin{thm}\label{thrm-maa2} Let $\gamma_k>0$ and $\sigma > 0$ with $\gamma_k \sigma > \epsilon$
for some $\epsilon > 0$.  Let $G= B_0 + \sigma I = (\gamma_k^{-1}+\sigma )I$, and let 
$H = \sum_{i = 0}^{2k-1} E_i$, where 
$$
  E_0=-a_0a_0^T, \mgap
  E_1=b_0b_0^T, \mgap \ldots, \mgap E_{2k-2}=-a_{k-1}a_{k-1}^T, \mgap
  E_{2k-1}=b_{k-1}b_{k-1}^T.
$$ 
Then $(B_k+\sigma I)^{-1} = (G+H)^{-1}$
is given by (\ref{eqn-g+h})  together with (\ref{eqn-maa-recursion}).
\end{thm}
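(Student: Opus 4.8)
The plan is to deduce the theorem directly from Theorem~\ref{thrm-maa} (Miller's recursion), so the real content is verifying that the data $G=(\gamma_k^{-1}+\sigma)I$ and $E_0,\dots,E_{2k-1}$ satisfy its hypotheses. First I would record the bookkeeping: using the update formula (\ref{eqn-bfgs}), the full sum is $G+H=(B_0+\sigma I)-\sum_{j=0}^{k-1}a_ja_j^T+\sum_{j=0}^{k-1}b_jb_j^T=B_k+\sigma I$, and the partial sums unwind as $C_{2j}=B_j+\sigma I$ and $C_{2j+1}=B_j+\sigma I-a_ja_j^T$ for $j=0,\dots,k-1$ (an easy induction: adding $E_{2j}=-a_ja_j^T$ and then $E_{2j+1}=b_jb_j^T$ to $B_j+\sigma I$ reproduces one BFGS step). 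This identifies precisely which matrices must be shown nonsingular.

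The routine hypotheses I would dispatch quickly. Since $\gamma_k>0$ and $\sigma>0$, $G$ is a positive multiple of the identity and hence nonsingular; by the standing assumption that every L-BFGS matrix is positive definite, $G+H=B_k+\sigma I$ is positive definite and so nonsingular; and each $E_i$ has rank one because $a_j=B_js_j/\sqrt{s_j^TB_js_j}\ne0$ (as $B_j$ is positive definite and $s_j\ne0$) and $b_j=y_j/\sqrt{y_j^Ts_j}\ne0$ (as the curvature condition $y_j^Ts_j>0$ forces $y_j\ne0$). The even partial sums $C_{2j}=B_j+\sigma I$ are positive definite for the same reason as $G+H$.

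The crux is the nonsingularity of the odd partial sums $C_{2j+1}=B_j-a_ja_j^T+\sigma I$, i.e.\ the steps in which a rank-one term is \emph{subtracted}. Here I would prove the key fact that $B_j-a_ja_j^T$ is positive semidefinite: for any $v$, Cauchy--Schwarz in the $B_j$-inner product gives $(s_j^TB_jv)^2\le(s_j^TB_js_j)(v^TB_jv)$, whence $v^T(B_j-a_ja_j^T)v=v^TB_jv-(s_j^TB_jv)^2/(s_j^TB_js_j)\ge0$. Consequently $C_{2j+1}=(B_j-a_ja_j^T)+\sigma I$ is positive definite, hence nonsingular. It is worth stressing that $B_j-a_ja_j^T$ is only \emph{semi}definite --- indeed $(B_j-a_ja_j^T)s_j=0$, so $s_j$ lies in its null space --- so the strict positivity of $\sigma$ is exactly what rescues the subtraction steps; this is the point at which $\sigma>0$ is indispensable. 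With every $C_i$ nonsingular and every $E_i$ of rank one, Theorem~\ref{thrm-maa} applies and yields $(B_k+\sigma I)^{-1}=(G+H)^{-1}$ via (\ref{eqn-g+h}) together with (\ref{eqn-maa-recursion}).

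Finally I would explain the quantitative role of $\gamma_k\sigma>\epsilon$. Nonsingularity alone only needs $\sigma>0$; the product condition controls how far the pivots $v_k=1/(1+\trace(C_k^{-1}E_k))$ stay from degenerating. For a subtraction step the denominator is $1-a_j^TC_{2j}^{-1}a_j$, and in the single-update case a direct computation with $a_0^TC_0^{-1}a_0=\gamma_k^{-1}/(\gamma_k^{-1}+\sigma)$ gives $1-a_0^TC_0^{-1}a_0=\gamma_k\sigma/(1+\gamma_k\sigma)$; keeping $\gamma_k\sigma$ bounded below by $\epsilon$ keeps this pivot bounded away from zero, so the recursion is well defined and numerically stable. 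The main difficulty, as indicated, lies entirely in establishing the positive semidefiniteness of $B_j-a_ja_j^T$ and in recognizing that $\sigma I$ is what restores invertibility across the subtraction steps; once that is in hand the result is an immediate application of Theorem~\ref{thrm-maa}.
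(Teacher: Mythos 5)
Your proof is correct and follows essentially the same route as the paper: verify the hypotheses of Theorem~\ref{thrm-maa} by showing every partial sum $C_j$ is nonsingular, with the even ones being $B_i+\sigma I$ (positive definite by the standing assumption) and the odd ones handled by showing $B_i-a_ia_i^T$ is positive semidefinite --- your Cauchy--Schwarz bound is exactly the paper's $\cos^2$ identity in different clothing --- so that adding $\sigma I$ restores positive definiteness. The only divergence is that the paper treats $C_1$ separately via $\det(C_1)=\det(C_0)\,(1-a_0^TC_0^{-1}a_0)$ and the computation $a_0^TC_0^{-1}a_0=1/(1+\gamma_k\sigma)$, which is where its hypothesis $\gamma_k\sigma>\epsilon$ actually enters, whereas you correctly note that the uniform semidefiniteness argument already covers $C_1$ and that only $\sigma>0$ is needed for invertibility, with $\gamma_k\sigma>\epsilon$ serving as a quantitative bound on the pivots rather than a nonsingularity requirement.
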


\begin{proof}
  Notice that this theorem follows from 
  Theorem~\ref{thrm-maa}, provided we satisfy its assumptions.
  By construction, $B_k+\sigma I = G+H$. Both $B_k$ and $B_k+\sigma
  I$ are nonsingular since $B_k$ is positive definite and $\sigma \ge
  0$.  It remains to show that $C_{j}$, which is given by
$$
	C_j =  G + \sum_{i=0}^{j-1}E_i = (B_0 +  \sum_{i=0}^{j-1}E_i) + \sigma I,
$$ 
is nonsingular for $j=1,\ldots k-1$, for which we use induction on $j$.  

  Since $C_1=C_0-a_0a_0^T = C_0 ( I - C_0^{-1}a_0 a_0^T)$, 
  the determinant of $C_1$ and $C_0$ are related as follows \cite{DenM77}:
  $$
  	\det(C_1) = \det(C_0)(1-a_0^TC_0^{-1}a_0).
  $$
  In other words, $C_1$ is invertible if $C_0$ is invertible and
  $a_0C_0^{-1}a_0\neq 1$.  We already established that $C_0$ is
  invertible; to show the latter condition, we use the definition of
  $a_0=B_0s_0/\sqrt{s_0^TB_0s_0}$ together with
  $C_0^{-1}=(\gamma_k^{-1}+\sigma)^{-1} I$ to obtain the following:
\begin{eqnarray} 
  a_0^TC_0^{-1}a_0 
&  = &
  \frac{\gamma_k^{-2}(\gamma_k^{-1}  +\sigma)^{-1}}{\gamma_k^{-1}s_0^Ts_0} s_0^Ts_0 \nonumber \\
 & = &
  \frac{1}{\gamma_k(\gamma_k^{-1}+\sigma)} \nonumber \\
&  = &
\frac{1}{1+\gamma_k\sigma}. \label{eqn-aC0a}
\end{eqnarray}
By hypothesis, $\gamma_k\sigma>\epsilon$, which implies that det$(C_1) \ne 0$; 
thus, $C_1$ is invertible.  

Now we assume that $C_j$ is invertible and show that $C_{j+1}$ is
invertible.  If $j$ is odd, then $j + 1 = 2i$ for some $i$ and $C_{j+1} =
B_i + \sigma I$, which is positive definite and therefore nonsingular.  If
$j$ is even, i.e., $j= 2i$ for some $i$, then $C_j = B_i + \sigma I,$ and
\begin{eqnarray*}
	C_{j+1} \ = \ 
	C_j - a_{i}a_{i}^T \ = \
	 B_{i} - \frac{1}{s_{i}^T B_{i} s_{i}} B_{i} s_{i} s_{i}^T B_{i}^T + \sigma I 
\end{eqnarray*}
We will demonstrate that $C_{j+1}$ is nonsingular by showing that it is positive definite.  Consider $z \in \mathbb{R}^n$ with $z \ne 0$.  Then 
\begin{eqnarray}
	z^T C_{j+1} z &=& z^T \! \left ( B_{i} - \frac{1}{s_{i}^T B_{i} s_{i}} B_{i} s_{i} s_{i}^T B_{i}^T \right ) \! z
	+ \sigma \| z \|_2^2 \nonumber \\
	&=& z^T B_{i} z - \frac{(z^T B_i s_i)^2}{s_{i}^T B_{i} s_{i}} 
	+ \sigma \| z \|_2^2  \nonumber \\
	&=& \| B_i^{1/2} z \|_2^2 -
	 \frac{\big ((B_i^{1/2}z)^T(B_i^{1/2}s_i) \big )^2}{ \| B_i^{1/2} s_i \|_2^2}
	+ \sigma \| z \|_2^2  \nonumber
	\\
	&=& \| B_i^{1/2} z \|_2^2 -  \| B_i^{1/2} z \|_2^2  \cos^2 \! \big ( 
	\angle ( B_i^{1/2}z, B_i^{1/2} \! s_i ) \big )  + \sigma \| z \|_2^2 \nonumber \\[.2cm]
	&>& 0. \label{eqn-zCkz}
\end{eqnarray}
We have now satisfied all the assumptions of Theorem \ref{thrm-maa}.  Therefore,
$(B_j+\sigma I)^{-1}$ is given by (\ref{eqn-g+h})  together with (\ref{eqn-maa-recursion}).
\end{proof}

Now, we show $r = C_{k+1}^{-1}z$ in (\ref{eqn-maa-recursion}) can be
computed efficiently using recursion.  We note that using
(\ref{eqn-maa-recursion}), we have
$$
	C_{k+1}^{-1}z \ = \
	C_k^{-1}z - v_k C_k^{-1} E_k C_k^{-1}z
	\ = \
	\begin{cases}
		C_k^{-1}z + v_k C_k^{-1} a_{\frac k2} a_{\frac k2}^T C_k^{-1} z
		 & \textrm{if $k$ is even} \\
		C_k^{-1}z - v_k C_k^{-1} b_{\frac{k-1}{2}} b_{\frac{k-1}{2}}^TC_k^{-1} z &\textrm{if $k$ is odd.}
	\end{cases}
$$
The quantity $v_k$ is obtained using (\ref{eqn-vk}) and computing
$\trace(C_k^{-1}E_k)$, which after substituting in the definition of $E_k$
and computing the trace, is given by
$$
\trace(C_k^{-1}E_k)=
\begin{cases}
-a_{k/2}^TC_k^{-1}a_{k/2} & \textrm{if  $k$ is even}\\
\ \ \ \ \! b_{(k-1)/2}^TC_k^{-1}b_{(k-1)/2} & \textrm{if $k$ is odd}
\end{cases}
$$
If we define $p_k$ according to the following rules
\begin{equation} \label{eqn-pk}
	p_k \ = \ 
	\begin{cases}
		C_k^{-1}a_{k/2} & \textrm{if $k$ is even} \\
		C_k^{-1}b_{(k-1)/2} & \textrm{if $k$ is odd,}
	\end{cases}
\end{equation}
then
\begin{eqnarray*}
	v_k &=& \begin{cases} 
	\displaystyle \frac{1}{1 - p_k^Ta_{k/2}} & \textrm{if $k$ is even} \\
	\displaystyle \frac{1}{1 + p_k^Tb_{(k-1)/2}} & \textrm{if $k$ is odd,}
	\end{cases}
\end{eqnarray*}
and thus,
$$
	C_{k+1}^{-1}z \ = \
	C_k^{-1}z + (-1)^k v_k (p_k^T z) p_k.
$$
Applying this recursively yields the following formula:
\begin{equation} \label{eqn-Ck1}
	C_{k+1}^{-1}z \ = \ C_0^{-1}z + \sum_{i=0}^k (-1)^i v_i (p_i^Tz) p_i,
\end{equation}
for $k\ge 0$ and with $C_0^{-1}z = (\gamma_k^{-1} + \sigma)^{-1}z$.

Finally, it remains to demonstrate how to compute $p_k$ in (\ref{eqn-pk}).
Notice that we can compute $p_k$ using (\ref{eqn-Ck1}):
$$
	p_k = 
	\begin{cases}
		\displaystyle
		C_k^{-1}a_{k/2}  \hspace{.55cm} \ = \ 
		C_0^{-1}a_{k/2} + \sum_{i = 0}^{k-1}(-1)^iv_i(p_i^Ta_{k/2})p_i & \textrm{if $k$ is even} \\
		\displaystyle
		C_k^{-1}b_{(k-1)/2} \ = \ 
		C_0^{-1}b_{(k-1)/2} + \sum_{i = 0}^{k-1}(-1)^iv_i(p_i^Tb_{(k-1)/2})p_i
		& \textrm{if $k$ is odd.}
	\end{cases}
$$
The following pseudocode summarizes the algorithm for computing $r =
C_{k+1}^{-1}z$:

\begin{Pseudocode}{Algorithm 4.1: Proposed recursion to compute $r=C_{k+1}^{-1}z$}\label{alg-recursionC}
  \tab $r \leftarrow (\gamma_{k+1}^{-1} + \sigma)^{-1}z$;
  \tab \FOR\ $j = 0, \dots, k$
  \tabb \IF\ $j$ even
  \tabbb $c \leftarrow a_{j/2}$;
  \tabb \ELSE
  \tabbb $c \leftarrow b_{(j-1)/2}$;
  \tabb \END\
  \tabb $p_j \leftarrow (\gamma_{k+1}^{-1}+\sigma)^{-1}c$;
  \tabb \FOR\ $i = 0, \dots, j-1$
  \tabbb $p_j \leftarrow p_j + (-1)^iv_i (p_i^Tc) p_i$;
  \tabb \END\
  \tabb $v_j \leftarrow 1/(1 + (-1)^{j}p_j^Tc)$;
  \tabb $r \leftarrow r + (-1)^{i}v_i (p_i^Tz) p_i$;
  \tab \END\

\end{Pseudocode}

Algorithm~\ref{alg-recursionC} requires $\mathcal{O}(k^2)$ vector inner
products. Operations with $C_0$ and $C_1$ can be hard-coded since $C_0$ is
a scalar-multiple of the identity.  Experience has shown that $k$ may be
kept small (for example, Byrd et al.~\cite{ByrNS94} suggest $k\in [3,7]$),
making the extra storage requirements and computations affordable. 

\section{Numerical Experiments}
\label{sec-experiments}

We demonstrate the effectiveness of the proposed recursion formula by solving 
linear systems of the form (\ref{eqn-bfgs}) with various sizes.  Specifically, we let
the number of updates $M=5$ and the size of the matrix range from $n = 10^3$ up to $10^7$.
We implemented the proposed method in Matlab on a Two 2.4 GHz Quad-Core Intel Xeon 
``Westmere''  Apple Mac Pro and compared it to a direct method using the 
Matlab ``backslash'' command and the built-in conjugate-gradient (CG) method (pcg.m).  
Because of limitations in memory, we were only able to use the direct method for problems where
$n \le 20,000$.  In the tables below, we show the time and the relative residuals for each method.
The relative residuals for the recursion formula are used as the criteria for convergence for CG.  
In other words, the time reported in this table reflects how long it takes for CG to achieve 
the same accuracy as the proposed recursion method, which is why the CG relative residuals
are always less than those for the proposed recursion formula (except for one instance
where the CG method stagnated.)

\bigskip

\noindent \textbf{Analysis.}  
The three methods were run on numerous problems with various problems sizes, 
and we note that all methods achieve very small
relative residual errors for each of the problems we considered.  
Besides from memory issues, the direct method suffers from
significantly longer computational time, especially for the larger problems.  Generally,
the recursion algorithm takes about one-fourth the amount of time as the CG method.
The CG method requires $2M+2$ vector-vector products per iteration ($2M$ for the matrix-vector
product and $2$ for other vector-vector products) and in exact arithmetic
will converge in $2M+1$ iterations (because the matrix $B_M$ in (\ref{eqn-bfgs}) is the sum of
a scaled multiple of the identity with $2M$ rank-1 matrices, which means that $B_M$
has at most $2M+1$ unique eigenvalues).  However, from our computational experience, 
the number of iterations is closer to $4M$, which brings the total vector-vector 
multiplications for CG to around $8M^2$.  Meanwhile, the number of vector-vector
multiplications for the recursion formula in Algorithm \ref{alg-recursionC} is 
$(2M+1)(2M+2)/2 = 2M^2 + 3M + 1$, which explains why the CG algorithm takes
roughly four times as long to achieve the same accuracy as the the proposed recursion
algorithm.

\begin{table}
\centering
\begin{tabular}{|c|ccc|ccc|}
\hline
\multirow{2}{*}
{$n$}
& 
\multicolumn{3}{c|} 
{Relative Residual}\\
& Direct & CG & Recursion \\
\hline
$ \ \ \! 1,000$ & \texttt{\ 2.84e-14\ } & \texttt{\ 2.55e-14\ } & \texttt{\ 3.62e-14\ }
\\
$ \ \ \! 2,000$ & \texttt{\ 6.59e-14\ } & \texttt{\ 5.57e-14\ } & \texttt{\ 2.95e-13\ }
\\
$ \ \ \! 5,000$ & \texttt{\ 1.02e-13\ } & \texttt{\ 7.83e-14\ } & \texttt{\ 8.83e-14\ }
\\
$ 10,000$ & \texttt{\ 1.39e-13\ } & \texttt{\ 1.09e-13\ } & \texttt{\ 1.11e-13\ }
\\
$ 20,000$ & \texttt{\ 2.63e-14\ } & \texttt{\ 2.10e-14\ } & \texttt{\ 2.14e-14\ } 
\\
\hline
\end{tabular}
\caption{A sample run comparing the relative residuals of the solutions using 
the Matlab ``backslash" command, conjugate gradient method, and the proposed
recursion formula.   The relative residuals for the recursion formula are used as the criteria
for convergence for CG.  }
\label{table:smallresid}
\end{table}

\begin{table}
\centering
\begin{tabular}{|c|ccc|}
\hline
\multirow{2}{*}
{$n$}
& 
\multicolumn{3}{c|} 
{Time (sec)}\\
& Direct & CG & Recursion \\
\hline
$ \ \ \! 1,000$ & \texttt{\ \ 0.0311 \ } & \texttt{\ 0.0078 } & \texttt{\ \ 0.0015 \ }
\\
$ \ \ \! 2,000$ & \texttt{\ \ 0.2068 \ } & \texttt{\ 0.0099 } & \texttt{\ \ 0.0019 \ }
\\
$ \ \ \! 5,000$ & \texttt{\ \ 1.3692 \ } & \texttt{\ 0.0211 } & \texttt{\ \ 0.0048 \ }
\\
$ 10,000$ & \texttt{\ \ 8.0280 \ } & \texttt{\ 0.0306 } & \texttt{\ \ 0.0083 \ }
\\
$ 20,000$ & \texttt{\ 51.7772 \ } & \texttt{\ 0.0862 } & \texttt{\ \ 0.0160 \ }
\\
\hline
\end{tabular}
\caption{The computational times to achieve the results in Table \ref{table:smallresid}.}
\label{table:smalltime}
\end{table}

\begin{table}
\centering
\begin{tabular}{|c|cc|cc|}
\hline
\multirow{2}{*}
{$n$}
& 
\multicolumn{2}{c|}
{Relative residual} &
\multicolumn{2}{c|} 
{Time (sec)}\\
& CG & Recursion 
& CG & Recursion \\
\hline
$ \quad \ \ \! 100,000$ & \texttt{\ 8.71e-14\ } & \texttt{\  1.50e-13\ } & \texttt{ \ \ 0.2339 \ }
& \texttt{\ \ 0.0584 \ }
\\
$ \quad \ \ \! 200,000$ & \texttt{\ 1.47e-14\ } & \texttt{\ 3.27e-14\ } & \texttt{ \ \ 0.4686 \ }
& \texttt{\ \ 0.1155 \ }
\\
$ \quad \ \ \! 500,000$ & \texttt{\ \  4.90e-14$^{\star}$ } & \texttt{\ 3.55e-14\ } & \texttt{ \ \ 1.6996 \ }
& \texttt{\ \ 0.3587 \ }
\\
$ \ \ \! 1,000,000$ & \texttt{\ 9.99e-15\ } & \texttt{\ 1.03e-14\ } & \texttt{ \ \ 6.0068 \ }
& \texttt{\ \ 1.0914 \ }
\\
$ \ \ \! 2,000,000$ & \texttt{\ 1.10e-13\ } & \texttt{\ 6.54e-13\ } & \texttt{ \ 15.9798 \ }
& \texttt{\ \ 2.5653 \ }
\\
$ \ \ \! 5,000,000$ & \texttt{\ 3.08e-14\ } & \texttt{\ 4.84e-14\ } & \texttt{ \ 30.2865 \ } 
& \texttt{\ \ 6.2738 \ }
\\
$ 10,000,000$  & \texttt{\ 1.33e-14\ } & \texttt{\ 3.97e-14\ } & \texttt{ \ 67.3049 \ } 
& \texttt{\ 11.5946 \ }
\\ \hline
\end{tabular}
\caption{Comparison between the proposed recursion method and the built-in conjugate gradient
(CG) method in Matlab.  The relative residuals for the recursion formula are used as the criteria
for convergence for CG.  In other words, the time reported in this table reflects how long it takes for CG
to achieve the same accuracy as the proposed recursion method.  $^{\star}$In this case,
CG terminated without converging to the desired tolerance because ``the method stagnated."}
\label{table-large}
\end{table}

\section{Extensions}
\label{sec-extensions}
The proposed recursion algorithm also
computes products of the form $(B_k+D)^{-1}z$, where $D$ is any
positive-definite diagonal matrix.  In this case, we must assume that each
diagonal entry in $D$ satisfies $d_{ii} \ge \sigma$ for some $\sigma > 0$.
Provided $\gamma_k\sigma>\epsilon$, a theorem similar to Theorem
\ref{thrm-maa2} will hold true \textsl{mutatis mutandis}: the only
steps in the proof that need changing are (\ref{eqn-aC0a}), which becomes
$$
	a_0^TC_0^{-1}a_0 
	=
	\frac{\gamma_k^{-2}}{\gamma_k^{-1}s_0^Ts_0} s_0^T (B_0 + D)^{-1}s_0
	=
	\frac{1}{\gamma_k s_0^Ts_0} \sum_{i=1}^n \frac{1}{\gamma_k^{-1} + d_{ii}} (s_0)_i^2
	 \le 
	\frac{1}{1 + \gamma_k \sigma},
$$
and the cascading equations in (\ref{eqn-zCkz}), whose $\sigma \| z \|_2^2$
terms become $z^TDz$, which is greater than or equal to $\sigma \| z
\|_2^2$.  Additionally, the recursion formula for diagonal updates need not
be limited to L-BFGS systems.  In particular, it is also applicable to other
quasi-Newton systems where a recursion formula exists and the quasi-Newton
matrices are guaranteed to be positive definite.  For example, the proposed
recursion will work with quasi-Newton matrices using the DFP updating
formula, but it will not work with quasi-Newton matrices based on SR1, which
are not guaranteed to be positive definite (for more information on the DFP
and SR1 method see, for example,~\cite{NocW06}).

\section{Concluding remarks}
\label{sec:conc}

In this paper, we proposed an algorithm based on the SMW formula to solve
systems of the form $B+\sigma I$, where $B$ is an $n\times n$ L-BFGS
quasi-Newton matrix.  We showed that as long as $\gamma\sigma >\epsilon$,
the algorithm is well-defined.  The algorithm requires at most $M^2$ vector
inner products.  (Note: We assume that $M\ll n$, and thus, $M^2$ is also
significantly smaller than $n$.)
While the algorithm is designed to handle constant diagonal updates of a
quasi-Newton matrix, it can be extended to handle general diagonal updates
of a quasi-Newton matrix.  Furthermore, this algorithm can be extended to
handle any quasi-Newton updating that ensures the quasi-Newton matrices are
all positive definite.  
The algorithm proposed in this paper can be found at 
\texttt{http://www.wfu.edu/$\sim$erwayjb/software}.  

\newpage





\bibliographystyle{model1b-num-names}
\bibliography{jbe}







\end{document}